\newtheorem{theorem}{Theorem}[section]
\newtheorem*{theorem*}{Theorem}
\newtheorem{lemma}[theorem]{Lemma}
\newtheorem{corollary}[theorem]{Corollary}
\theoremstyle{remark}
\theoremstyle{definition}
\newtheorem{example}[theorem]{Example}
\newtheorem{definition}[theorem]{Definition}
\newcommand{\Spvek}[2][r]{%
  \gdef\@VORNE{1}
  \left(\hskip-\arraycolsep%
    \begin{array}{#1}\vekSp@lten{#2}\end{array}%
  \hskip-\arraycolsep\right)}
\def\vekSp@lten#1{\xvekSp@lten#1;vekL@stLine;}
\def\vekL@stLine{vekL@stLine}
\def\xvekSp@lten#1;{\def\temp{#1}%
  \ifx\temp\vekL@stLine
  \else
    \ifnum\@VORNE=1\gdef\@VORNE{0}
    \else\@arraycr\fi%
    #1%
    \expandafter\xvekSp@lten
  \fi}
\journal{}
\def\ps@pprintTitle{%
 \let\@oddhead\@empty
 \let\@evenhead\@empty
 \def\@oddfoot{}%
 \let\@evenfoot\@oddfoot}
\begin{document}

\begin{frontmatter}



\title{On lattices with a smallest set of aggregation functions\footnote{Preprint of an article published by Elsevier in the Information Sciences 325 (2015), 316-323. It is available online at: \newline www.sciencedirect.com/science/article/pii/S0020025515005277}}


\author{Radom\'ir Hala\v{s}}
\address{Palack\'y University Olomouc, Faculty of Science, Department of Algebra and Geometry, 17. listopadu 12, 771 46 Olomouc, Czech Republic}
\ead{radomir.halas@upol.cz}

\author{Jozef P\'ocs}
\address{Palack\'y University Olomouc, Faculty of Science, Department of Algebra and Geometry, 17. listopadu 12, 771 46 Olomouc, Czech Republic\\ 
and\\
Mathematical Institute, Slovak Academy of Sciences,\\ Gre\v s\'akova 6, 040 01 Ko\v sice, Slovakia}
\ead{pocs@saske.sk}

\begin{abstract}
Given a bounded lattice $L$ with bounds $0$ and $1$, it is well known that the set $\mathsf{Pol}_{0,1}(L)$ of all $0,1$-preserving polynomials of $L$ forms a natural subclass of the set $\mathsf{C}(L)$ of aggregation functions on $L$. The main aim of this paper is to characterize all finite lattices $L$ for which these two classes coincide, i.e. when the set $\mathsf{C}(L)$ is as small as possible. These lattices are shown to be completely determined by their tolerances, also several sufficient purely lattice-theoretical conditions are presented. In particular, all simple relatively complemented lattices or simple lattices for which the join (meet) of atoms (coatoms) is $1$ ($0$) are of this kind.
\end{abstract}

\begin{keyword}
polynomial\sep aggregation function\sep lattice \sep tolerance.

\MSC 06B99  

\end{keyword}

\end{frontmatter}

\section{Introduction}

The problem of merging certain (usually numerical) data in a single output is one of the central problems of applied mathematics.  
Its mathematical theory is based on the notion of an aggregation function describing the process of merging. 

Aggregation functions can be found in many different branches of science, perhaps the most widely used one in all experimental sciences is the arithmetic mean. In fact, aggregation functions appear in a pure mathematics (functional equations, theory of means and averages, measure and integration theory), in applied mathematics (probability, statistics, decision theory), computer and engineering sciences (artificial intelligence, operation research, data fusion, automatic reasoning etc.). Let us mention that aggregation functions are not only used in natural sciences, quite recently they were successfully applied also in social sciences, economy, life sciences and other branches of research.

The central idea behind the process of aggregation is that it should somehow represent the ``synthesis" of input data, consequently the aggregation functions cannot be arbitrary and have to satisfy some natural minimal conditions: the output value should lie in the same domain as the input ones, and, additionally, the boundary values should be preserved.
The second natural widely accepted condition is nondecreasing monotonicity of the aggregation function, meaning that the increase of any of the input values should reflect this increase, or at worst, stay constant.

In case when the input (and, consequently, the output) values of 
these functions lie in a nonempty bounded real interval $\mathbb I=[a,b]$, the formal definition is as follows:
\noindent
an ($n$-ary) aggregation function on $\mathbb I^n$ is a function $A: \mathbb I^n\mapsto \mathbb I$ that 
\begin{itemize}
\item [(i)] is nondecreasing (in each variable)
\item [(ii)] fulfills the boundary conditions
\begin{equation}\label{eq1}
A(a,\dots,a)=a \quad {\rm and} \quad A(b,\dots,b)=b.
\end{equation}
\end{itemize}

The integer $n$ represents the arity of the aggregation function. Let us note that sometimes an additional 
condition for an aggregation function is required. Namely, since for $n=1$ there is nothing to aggregate, it is 
quite natural to ask $A(x)=x$. In our case this restriction is not considered.
For details we refer the reader e.g. to the comprehensive 
monographs \cite{GMRP}, \cite{CMM} or \cite{BPC}.

As bounded real intervals can be viewed as (complete) lattices, the theory of aggregation functions can be easily 
transferred to bounded lattices or even to bounded posets. The study of aggregation 
functions on lattices is a quite new quickly developing topic with possible applications in many areas of research, cf. \cite{Mes1,Mes2,M2,Mes3,Mes4,MZ1,MZ2,M1} or \cite{Pal1,Pal2,Pal3} for results concerning various lattice-valued connectives.    


Certainly, one of the central problems connected with aggregation functions is their construction. It is easy to see that for any bounded lattice $L$, the set $\mathsf{Pol}_{0,1}(L)$ of $0,1$-preserving polynomials of $L$ represents a natural subclass of the set $\mathsf{C}(L)$ of aggregation functions on $L$. This simple fact immediately leads to the following problem:
\begin{itemize}
\item Characterize lattices $L$, for which $\mathsf{C}(L)=\mathsf{Pol}_{0,1}(L),$
\end{itemize}
\noindent
i.e., lattices for which the set of aggregation functions $\mathsf{C}(L)$ is as small as possible. We call them {\it lattices with a smallest set of aggregation functions}. Through the paper we will denote the class of all finite lattices satisfying this property by $\mathsf{S}_{\mathsf{agg}}$.

Let us note that the problem concerning a characterization of the class $\mathsf{S}_{\mathsf{agg}}$ is closely related to that of polynomial representability of various types of functions, cf. \cite{KP}. The main advantage of our approach relies on the use of elementary techniques, it is shown that finite lattices belonging to the class $\mathsf{S}_{\mathsf{agg}}$ are completely determined by their tolerances. Also several sufficient purely lattice-theoretical conditions are presented. In particular, we prove that all simple relatively complemented lattices or simple lattices for which the join (meet) of atoms (coatoms) is $1$ ($0$) are of this kind.

The paper is organized as follows: first, we give an overview of some basic definitions and facts on lattices, polynomials and tolerances. In the last section we present a characterization of finite lattices having smallest sets of aggregation functions.

\section{Lattices, polynomials, tolerances}

To make the paper self-contained, we recall some necessary concepts from universal algebra and lattice theory, for more details we refer the reader to the comprehensive monographs cf. \cite{Burris} or \cite{McKenzie}. Let us mention that lattice theory is a very well established discipline of universal algebra, there are 
several monographs on this topic, among them the most frequently used are the books by G. Gr\"atzer, \cite{G1,G2}. 

First, we recall the definition of lattices as algebraic structures.
\begin{definition}
An algebraic structure $(L;\vee,\wedge)$ consisting of a nonempty set $L$ and two binary operations $\vee$, $\wedge$ on $L$ is called a \textit{lattice} if for all $a,b$ and $c$ in $L$ the following hold:
\begin{equation*} 
\begin{aligned}[c]
a&=a\vee a, \\
a\vee b&=b\vee a, \\
a\vee (b\vee c)&=(a\vee b)\vee c,\\
a&=a\vee (a\wedge b),
\end{aligned}
\quad\quad
\begin{aligned}[c]
a\wedge a&=a. \\
a\wedge b&=b\wedge a.\\
a\wedge (b\wedge c)&=(a\wedge b)\wedge c.\\
a\wedge (a\vee b)&=a.
\end{aligned}
\end{equation*}
\end{definition}

Let us note that a lattice can be equivalently characterized as a poset $(L,\leq)$ such that $\sup\{a,b\}$ and $\inf\{a,b\}$ exist for all $a,b\in L$. In this case two binary operations $\vee$ and $\wedge$ representing suprema and infima fulfill the algebraic definition of lattices. Conversely, given a lattice $(L;\vee,\wedge)$ one can define 
$$ a\leq b \quad \mbox{iff}\quad b=a\vee b \quad \mbox{iff}\quad a=a\wedge b,$$ obtaining a lattice partial order on $L$. To simplify expressions, we usually do not distinguish between the lattice and its support. 



By a {\it sublattice} of $L$ is meant a subset $B\subseteq L$ closed under suprema and infima, i.e. fulfilling 
the properties $a\vee b\in B$ and $a\wedge b\in B$ for all $a,b\in B$. Equivalently, $B$ is a sublattice iff $(B;\vee,\wedge)$ is a lattice.

Given a lattice $(L;\vee,\wedge)$, by its {\it direct square} we mean a lattice $(L^2;\vee,\wedge)$ with the support $L^2$ being the Cartesian square of $L$ and lattice operations defined component-wise, i.e. $(a,b)\vee (c,d):=(a\vee c,b\vee d)$ and $(a,b)\wedge (c,d):=(a\wedge c,b\wedge d)$ for all $a,b,c,d\in L$. A sublattice $B$ of $L^2$ is called {\it diagonal} whenever 
$id_{L}=\{(a,a)\in L^2;\,a\in L\}\subseteq B$.   

\begin{definition}
Let $L$ be a lattice. A binary relation $T$ is \textit{compatible} on the lattice $L$ if $(a,b),(c,d)\in T$ imply $(a\vee c,b\vee d)\in T$ and $(a\wedge c,b\wedge d)\in T$ for any $a,b,c,d\in L$. A {\it tolerance} on a lattice $L$ is any reflexive, symmetric and compatible binary relation on $L$. By a \textit{congruence} we understand any compatible equivalence on $L$. Finally, $L$ is called \textit{simple}, if any congruence on $L$ is either $id_L$ or $L^2$.
\end{definition}

Using lattice operations on the direct square $L^2$ of $L$, tolerances can be viewed by another equivalent way: these are exactly diagonal symmetric sublattices of $L^2$. Note that congruences on $L$ are just its transitive tolerances.

Clearly, with respect to set inclusion $id_L$ is the least, while $L^2$ is the greatest tolerance on $L$, and we have $\mathsf{Con}(L)\subseteq \mathsf{Tol}(L)$ where $\mathsf{Tol}(L)$ resp. $\mathsf{Con}(L)$ is the set of tolerances resp. congruences of $L$. 

\begin{definition}
Let $L$ be a lattice and $n\in \mathbb{N}\cup \{0\}$ be a non-negative integer. By an $n$-ary {\it polynomial} on the lattice $L$ we mean any function $p: L^n\to L$ defined inductively as follows:
\begin{enumerate}
\item[--] For each $i\in \{1,\dots,n\}$ the $i$-th projection $p(x_1,\dots,x_n)=x_i$ is a polynomial.
\item[--] Any constant function $p(x_1,\dots,x_n)=a$ for $a\in L$ is a polynomial.
\item[--] If $p_{1}(x_1,\dots,x_n)$ and $p_{2}(x_1,\dots,x_n)$ are polynomials, then so does the functions $p_{1}(x_1,\dots,x_n)\vee p_{2}(x_1,\dots,x_n)$ and $p_{1}(x_1,\dots,x_n)\wedge p_{2}(x_1,\dots,x_n)$.
\item[--] Any polynomial is obtained by finitely many of the preceding steps. 
\end{enumerate}
\end{definition}

Informally, lattice polynomials are functions obtained by composing variables and constant functions by using of lattice operations.
Note that polynomials defined in this way are called as weighted lattice polynomials in \cite{M}.

We denote by $\mathsf{Pol}_u(L)$ the set of {\it unary polynomials} (i.e., polynomials of arity $n=1$) on a lattice $L$, 
and by $\mathsf{Pol}_{0,1}(L)$ polynomials preserving the bounds $0$ and $1$, i.e. those fulfilling 
$p(0,\dots,0)=0$ and $p(1,\dots,1)=1$.

Further, for $x,y\in L, x\leq y$, let $[x,y]=\{z\in L;\, x\leq z\leq y\}$ be the {\it interval} in $L$ between $x$ and $y$. 

\vskip10pt

We present several basic properties of tolerances on lattices which will be used in the next section:  

\begin{lemma}\label{lemn3}
Let $L$ be a lattice, $T\in \mathsf{Tol}(L)$ and $a,b\in L$. Then 
\begin{enumerate}[{\rm (i)}]
\item $(a,b)\in T$ and $a\leq b$ imply $(x,y)\in T$ for all $x,y\in L$ with $a\leq x,y\leq b$,
\item $(a,b)\in T$ yields $[a\wedge b,a\vee b]^2\subseteq T$,
\item $(a,b)\in T$ implies $\big(f(a),f(b)\big)\in T$ for all $f\in\mathsf{Pol}_u(L)$,
\item $B\circ B^{-1}\in \mathsf{Tol}(L)$ for any diagonal sublattice $B$ of $L^2$.
\end{enumerate}
\end{lemma}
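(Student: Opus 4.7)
The plan is to prove the four parts in order, since (ii) will rest on (i). For (i), I would fix $(a,b)\in T$ with $a\le b$ and arbitrary $x,y\in[a,b]$, and try to reach $(x,y)$ by combining $(a,b)$, its symmetric partner $(b,a)$, and the diagonal elements $(x,x),(y,y)$ via the componentwise lattice operations on $L^2$. The key observation is that because $a\le x\le b$, one has $(a,b)\vee(x,x)=(x,b)\in T$, and symmetrically $(b,a)\vee(y,y)=(b,y)\in T$; meeting these yields $(x,b)\wedge(b,y)=(x,y)\in T$. The only mildly tricky point is spotting that $x$ and $y$ should first be pushed to the common endpoint $b$ and only then recombined.

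For (ii), I would set $u=a\wedge b$ and $v=a\vee b$ and reduce everything to (i) by first producing the comparable pair $(u,v)\in T$. Since $T$ is closed under componentwise join and meet on $L^2$, the pairs $(a,v)=(a,b)\vee(a,a)$ and $(b,v)=(b,a)\vee(b,b)$ both lie in $T$, and their meet is $(a\wedge b,\,v\wedge v)=(u,v)$. Because $u\le v$, applying (i) to the pair $(u,v)$ then yields $[u,v]^2\subseteq T$.

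Part (iii) should be a straightforward induction on the inductive definition of a unary polynomial. The base cases are the identity projection (trivial) and constant polynomials (by reflexivity of $T$); the inductive step for $p_1\vee p_2$ and $p_1\wedge p_2$ is precisely compatibility of $T$ applied to the pairs $\bigl(p_i(a),p_i(b)\bigr)$ supplied by the induction hypothesis.

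For (iv) I would check the three defining properties of a tolerance for the relation $B\circ B^{-1}=\{(a,c): \exists b\in L,\ (a,b)\in B \text{ and } (c,b)\in B\}$. Reflexivity follows from $B$ being diagonal, by taking $b=a$; symmetry is immediate from the symmetric roles of $a$ and $c$ in the definition; compatibility follows because if $(a_i,c_i)\in B\circ B^{-1}$ are witnessed by $b_i$ for $i=1,2$, then, $B$ being a sublattice of $L^2$, the element $b_1\vee b_2$ (respectively $b_1\wedge b_2$) serves as a witness for $(a_1\vee a_2,\,c_1\vee c_2)$ (respectively the corresponding meet). I do not anticipate a serious obstacle anywhere; the only step calling for a little ingenuity is the choice of auxiliary pairs in (i) and (ii), and once the right combinations are identified, everything is a direct manipulation of $T$ as a sublattice of $L^2$.
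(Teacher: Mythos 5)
Your proposal is correct and follows essentially the same route as the paper's proof: the same join-then-meet manipulation for (i), reduction of (ii) to (i) via the comparable pair $(a\wedge b,a\vee b)$ (differing only in which intermediate pairs are formed), the same induction for (iii), and the same witness-based verification of the tolerance axioms for $B\circ B^{-1}$ in (iv). No gaps.
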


\begin{proof}
(i) Let $x,y\in[a,b]$ be arbitrary elements. Since $T$ is reflexive, it follows that $(x,x)\in T$ and compatibility of T with the lattice operations yields $(a\vee x, b\vee x)=(x,b)\in T$. Similarly $(y,b)\in T$ and due to symmetry and compatibility of $T$ we obtain $(x\wedge b,b\wedge y)=(x,y)\in T$.

(ii) In view of (i) it is sufficient to show that $(a\wedge b,a\vee b)\in T$. Again, $(a,b)\in T$ and $(b,b)\in T$ imply $(a\wedge b,b)\in T$ and, analogously, $(a\wedge b,a)\in T$. From this we obtain $\big((a\wedge b)\vee(a\wedge b),a\vee b\big)=(a\wedge b,a\vee b)\in T$.

(iii) We prove this claim by induction according to the number $m$ of the lattice operations involved in a polynomial. If $m=0$, then either $f(x)=c$ is a constant polynomial or $f(x)=x$ is the identity on $L$. Since $T$ is reflexive, in the first case we obtain $\big(f(a),f(b)\big)=(c,c)\in T$, while in the second case we have $\big(f(a),f(b)\big)=(a,b)\in T$. 

Further, assume that the assertion is valid for all polynomials involving fewer than $m$ lattice operations and let $f$ be a polynomial which contains precisely $m$ operations. Then $f$ can be expressed as $f(x)=f_1(x)\vee f_2(x)$ or $f(x)=f_1(x)\wedge f_2(x)$ for some polynomials $f_1,f_2$ containing fewer than $m$ operations. Using induction assumption, in both cases we obtain $\big(f(a),f(b)\big)\in T$.

(iv) Obviously, if $id_L\subseteq B$ then $B\circ B^{-1}$ is reflexive. Since $(B\circ B^{-1})^{-1}=B\circ B^{-1}$, it is symmetric as well. In order to prove the compatibility of $B\circ B^{-1}$, assume that $(a,b),(c,d)\in B\circ B^{-1}$, i.e. $(a,x)\in B, (x,b)\in B^{-1}$ and $(c,y)\in B, (y,d)\in B^{-1}$ for some $x,y\in L$. Consequently, $(b,x),(d,y)\in B$, and as $B$ is a sublattice of 
$L^2$, we conclude $(a\wedge c,x\wedge y),(b\wedge d,x\wedge y)\in B$. Thus $(x\wedge y,b\wedge d)\in B^{-1}$ and $(a\wedge c,b\wedge d)\in B\circ B^{-1}$.  

The compatibility of $B\circ B^{-1}$ with respect to join operation can be shown similarly.   

\end{proof}

For more details on tolerances of lattices we refer the reader to the comprehensive monograph \cite{Ch}.



\section{Lattices with a smallest set of aggregation functions}

In the sequel we will assume that all lattices are finite.
To simplify expressions, for any $n$-ary function $f: L^n\to L$ on a lattice $L$ and $\mathbf{x}=(x_1,\dots,x_n)\in L^n$ we put $f({\mathbf x}):=f(x_1,\dots,x_n)$. 
Obviously, the framework of aggregation functions can be modified by considering functions on any closed real interval, and clearly to any partially ordered structure with bounds (see e.g. \cite{KM}):

\begin{definition}
Let $(P,\leq,0,1)$ be a bounded partially ordered set (poset), let $n\in \mathbb N$. A mapping $A: P^n\to P$ is called 
an ($n$-ary) aggregation function on $P$ if it is nondecreasing, i.e. for any $\mathbf{x}, \mathbf{y}\in P^n$: 
$$A(\mathbf{x})\leq A(\mathbf{y})\,\, \text{ whenever }\,\, \mathbf{x}\leq \mathbf{y},$$
and it satisfies boundary conditions 
$$A(0,\dots,0)=0 \quad\text{ and }\quad A(1,\dots,1)=1.$$
\end{definition}  

For a more detailed discussion on aggregation functions on posets or lattices we recommend the paper by Demirci \cite{Dem}. 
Special types of aggregation functions on posets, especially triangular norms or conorms, are studied e.g. in \cite{DeBM,M2,M1}. 
It is easy to see that considering $P=[0,1]$ to be the standard interval of reals with the usual ordering, we obtain the   
classical definition of an aggregation function. 

A particular example of a ternary aggregation function on a $3$-element chain is schematically shown on Figure \ref{fig3}.

\begin{figure}
\begin{center}
\includegraphics[scale=1]{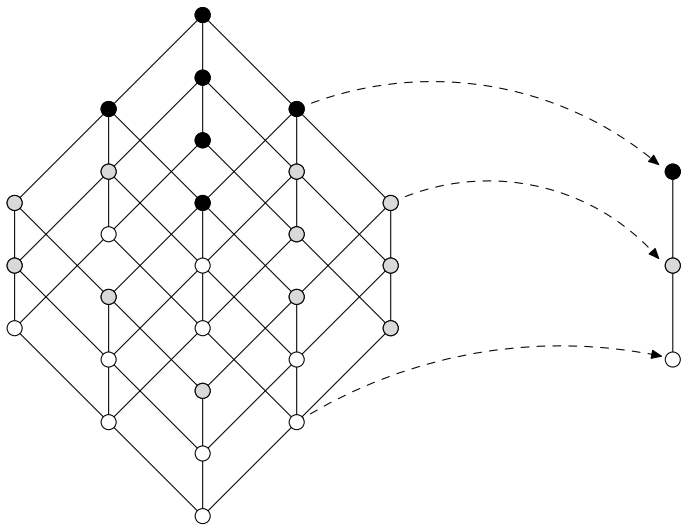}
\caption{A ternary aggregation function on a three-element chain}
\label{fig3}
\end{center}
\end{figure}

Denote by $\mathsf{C}(L)$ the set of aggregation functions on a lattice $L$. One can easily see that the set $\mathsf{Pol}_{0,1}(L)$ of $0,1$-preserving polynomials on $L$ is included in $\mathsf{C}(L)$. 

The important role concerning the solution of our problem to characterize the class $\mathsf{S}_{\mathsf{agg}}$ is played by the following unary aggregation functions on a lattice $L$: 

for any $a\in L$ we define $\chi_a\colon L\to L$ by
\begin{equation}\label{e1}
\chi_a(x)=
\begin{cases}&1, \text{ if }x\geq a, x\neq 0; \\
             &0, \text{ otherwise.}\\   
\end{cases}
\end{equation} 

Obviously, $\chi_a$ is an aggregation function for all $a\in L$. Moreover, it represents a characteristic function of the principal filter $F(a)=\{x\in L:x\geq a\}$ generated by $a$, provided $a\neq 0$. 

In what follows we will show that lattices with a smallest set of aggregation functions are completely characterized by their tolerances. 

Recall that an element $a\in L$ is \textit{join-irreducible} if $a\neq 0$ and $a=b\vee c$ yields  $a=b$ or $a=c$. For a finite lattice $L$, let $\mathsf{J}(L)$ denotes the set of all join-irreducible elements. Obviously, for any $0\neq a\in L$, $a=\bigvee X$ holds for some $X\subseteq \mathsf{J}(L)$. Let us notice that if an element $a\in L$ covers more than one element then it is not join irreducible. Consequently, for a finite lattice $L$, an element $a\in L$ is join-irreducible if and only if $a$ covers a unique element of $L$.

Further, an element $a$ of a lattice $L$ is called an {\it atom} if $a$ covers $0$, i.e. if $a>0$ and there is no 
element $b\in L$ with $a>b>0$. The set of all atoms of $L$ will be denoted by $\mathsf{At}(L)$. Dually, the elements 
of $L$ covered by $1$ are called its {\it coatoms}.

The following lemma shows that the aggregation functions $\chi_a$ for $a\in \mathsf{J}(L)$ play a crucial role in description of lattices from the class $\mathsf{S}_{\mathsf{agg}}$. 

\begin{lemma}\label{lemn1}
Let $L$ be a finite lattice. Then $\mathsf{C}(L)=\mathsf{Pol}_{0,1}(L)$ if and only if $\chi_a$ is a polynomial for each $a\in \mathsf{J}(L)$.
\end{lemma}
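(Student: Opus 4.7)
The plan is to handle the two directions separately, with the forward implication being essentially immediate and the reverse requiring a ``disjunctive normal form'' construction.

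For the forward direction ($\Rightarrow$), assume $\mathsf{C}(L)=\mathsf{Pol}_{0,1}(L)$ and fix $a\in\mathsf{J}(L)$. I would simply check that $\chi_a$ is an aggregation function: monotonicity follows from the upward closedness of the principal filter $F(a)$, and since $a\ne 0$ (as $a$ is join-irreducible) one has $\chi_a(0)=0$ and $\chi_a(1)=1$. Hence $\chi_a\in\mathsf{C}(L)=\mathsf{Pol}_{0,1}(L)$ and in particular $\chi_a$ is a polynomial.

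For the reverse direction ($\Leftarrow$), the first step is to extend the hypothesis from join-irreducibles to all nonzero elements. Since $L$ is finite, every nonzero $a\in L$ satisfies $a=\bigvee\{j\in\mathsf{J}(L):j\le a\}$, and a short case analysis shows
$$\chi_a(x)=\bigwedge_{j\in\mathsf{J}(L),\,j\le a}\chi_j(x)$$
for every $x\in L$ (including $x=0$, where both sides are $0$). Consequently $\chi_a$ is a polynomial for every nonzero $a\in L$.

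Now let $A\in\mathsf{C}(L)$ be an arbitrary $n$-ary aggregation function. Using monotonicity and finiteness one has the tautological representation $A(\mathbf{x})=\bigvee_{\mathbf{a}\le\mathbf{x}}A(\mathbf{a})$ (the join is realized at $\mathbf{a}=\mathbf{x}$). For each $\mathbf{a}=(a_1,\dots,a_n)\in L^n$ I would define
$$p_{\mathbf{a}}(\mathbf{x})=A(\mathbf{a})\wedge\bigwedge_{i:\,a_i\ne 0}\chi_{a_i}(x_i),$$
with the empty meet (when $\mathbf{a}=\mathbf{0}$) read as $1$. The key verification is that $p_{\mathbf{a}}(\mathbf{x})=A(\mathbf{a})$ if $\mathbf{a}\le\mathbf{x}$, and $p_{\mathbf{a}}(\mathbf{x})=0$ otherwise, using the fact that for $a_i\ne 0$ the condition $x_i\ge a_i$ is equivalent to $\chi_{a_i}(x_i)=1$. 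Taking the join over all $\mathbf{a}\in L^n$ then reproduces $A(\mathbf{x})$, exhibiting $A$ as a polynomial expression built from constants, the unary polynomials $\chi_{a_i}$ (which are polynomials by the extended hypothesis), and finitely many lattice operations. The boundary conditions $A(\mathbf{0})=0$ and $A(\mathbf{1})=1$ are built into the definition of an aggregation function, so $A\in\mathsf{Pol}_{0,1}(L)$.

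The main subtlety I anticipate is the handling of coordinates where $a_i=0$: the formula defining $\chi_a$ only makes sense (and is covered by the hypothesis) for nonzero $a$, so one must omit these indices from the conjunction. This is harmless because $x_i\ge 0$ always holds, so the omitted factor would be the ``correct'' truth value anyway; but it is the one place where a careful check is genuinely needed. Everything else is routine bookkeeping, made possible by the finiteness of $L^n$ which keeps all the joins and meets finite.
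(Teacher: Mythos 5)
Your proof is correct and follows essentially the same route as the paper: reduce from join-irreducibles to arbitrary nonzero elements via $\chi_a=\bigwedge_{j\le a}\chi_j$, then represent an arbitrary aggregation function as $\bigvee_{\mathbf{a}}\bigl(A(\mathbf{a})\wedge\bigwedge_{i\in I_a}\chi_{a_i}(x_i)\bigr)$ and use monotonicity to collapse the join. The only (harmless) difference is that you include $\mathbf{a}=\mathbf{0}$ with an empty-meet convention, whereas the paper restricts the join to nonzero tuples and checks $\mathbf{x}=\mathbf{0}$ separately.
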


\begin{proof}
Since any $\chi_a$ for $a\in \mathsf{J}(L)$ is an aggregation function, $\mathsf{C}(L)=\mathsf{Pol}_{0,1}(L)$ obviously implies that $\chi_a$ is a polynomial.

Conversely, assume that for every $a\in \mathsf{J}(L)$ the aggregation function $\chi_a$ is a polynomial. We show that under this assumption any aggregation function can be represented as a polynomial. 

First, we show that $\chi_c$ is a polynomial for all $c\in L$. Assume that $c\neq 0$ and $c=\bigvee X$ for some subset $\emptyset\neq X\subseteq \mathsf{J}(L)$. Given an arbitrary element $x\in L$ we obtain $\bigwedge_{a\in X}\chi_a(x)=1$ if and only if $\chi_a(x)=1$ for all $a\in X$, which is equivalent to $x\geq a$ for all $a\in X$. Evidently, this condition holds if and only if $x\geq \bigvee X=c$, yielding  $\chi_c(x)=\bigwedge_{a\in X}\chi_a(x)$. Hence the aggregation function $\chi_c$ is a polynomial. Moreover $\chi_0(x)=\bigvee_{a\in \mathsf{At}(L)} \chi_a(x)$. 

Further, let $n\geq 1$ be a positive integer. Denote by $L^n_0$ the set of all non-zero elements of $L^n$ and for $\mathbf{a}=(a_1,\dots,a_n)\in L^n_0$ we denote by 
$I_a=\{1\leq i\leq n: a_i\neq 0\}$ the set of all non-zero indexes. For an aggregation function $f\colon L^n\to L$ we put 

$$p(x_1,\dots,x_n)=\bigvee_{\mathbf{a}\in L^n_0} \big( f(\mathbf{a})\wedge\bigwedge_{i\in I_a} \chi_{a_i}(x_i)\big).$$

Obviously, $p\colon L^n\to L$ is a polynomial and we will show that $f(\mathbf{x})=p(\mathbf{x})$ for all $\mathbf{x}\in L^n$. For $\mathbf{x}=(0,\dots,0)$ we obtain

$$ p(0,\dots,0)=\bigvee_{\mathbf{a}\in L^n_0} \big( f(\mathbf{a})\wedge\bigwedge_{i\in I_a} \chi_{a_i}(0)\big)=\bigvee_{\mathbf{a}\in L^n_0} 0=0=f(0,\dots,0),$$
since $\chi_c$ for each $c\in L$ as well as $f$ satisfy the boundary condition for aggregation functions. For an $n$-tuple $\mathbf{x}=(x_1,\dots,x_n)\neq (0,\dots,0)$ we have
$$ p(\mathbf{x})=\bigvee_{\substack{\mathbf{a}\in L^n_{0}\\ \mathbf{a}\leq \mathbf{x}}} \big( f(\mathbf{a})\wedge\bigwedge_{i\in I_a} \chi_{a_i}(x_i)\big) \ \vee\  \bigvee_{\substack{\mathbf{a}\in L^n_{0}\\ \mathbf{a}\nleq \mathbf{x}}} \big( f(\mathbf{a})\wedge\bigwedge_{i\in I_a} \chi_{a_i}(x_i)\big).$$ 
As $\mathbf{a}\leq \mathbf{x}$ if and only if $a_i\leq x_i$ for all $i\in I_a$, we obtain $\bigwedge_{i\in I_a} \chi_{a_i}(x_i)=1$ provided $\mathbf{a}\leq \mathbf{x}$, while $\bigwedge_{i\in I_a} \chi_{a_i}(x_i)=0$ if $\mathbf{a}\nleq\mathbf{x}$. This yields 
$$ p(\mathbf{x})=\bigvee_{\substack{\mathbf{a}\in L^n_{0}\\ \mathbf{a}\leq \mathbf{x}}} \big( f(\mathbf{a})\wedge 1\big) \ \vee\  \bigvee_{\substack{\mathbf{a}\in L^n_{0}\\ \mathbf{a}\nleq \mathbf{x}}} \big( f(\mathbf{a})\wedge 0\big)=\bigvee_{\substack{\mathbf{a}\in L^n_{0}\\ \mathbf{a}\leq \mathbf{x}}} f(\mathbf{a}).$$ 
The monotonicity of $f$ implies $f(\mathbf{a})\leq f(\mathbf{x})$ for all $\mathbf{a}\in L^n_0$ with $\mathbf{a}\leq \mathbf{x}$. Finally,  we obtain 
$$p(\mathbf{x})=\bigvee_{\substack{\mathbf{a}\in L^n_{0}\\ \mathbf{a}\leq \mathbf{x}}} f(\mathbf{a})=f(\mathbf{x}),$$ which completes the proof.

\end{proof}

Based on the previous lemma, we present two nontrivial examples of lattices from $\mathsf{S}_{\mathsf{agg}}$.

\begin{example}

For a positive integer $n\geq 3$ consider the lattice $M_n$, consisting of $n$ mutually incomparable elements $a_1,\dots, a_n$, together with the universal bounds $0$ and $1$, cf. Figure \ref{fig1}.
Note that $M_n$ is a horizontal sum of $n$ three-element chains, see e.g. \cite{CHK}. 

\begin{figure}
\begin{center}
\includegraphics{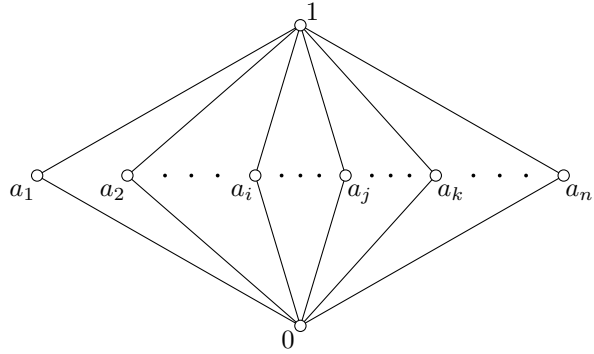}
\caption{The lattice $M_n$}
\label{fig1}
\end{center}
\end{figure}

Given an element $a_i$, let $a_j,a_k$ with $j\neq i\neq k$ be two different elements. Then it can be easily verified that  
$$\chi_{a_i}(x)=\big((x\wedge a_i)\vee a_j\big)\wedge\big((x\wedge a_i)\vee a_k\big).$$
Consequently, $\mathsf{C}(M_n)=\mathsf{Pol}_{0,1}(M_n)$ according to Lemma \ref{lemn1}. 
\end{example}

\begin{example}

Let $L$ be the lattice depicted in Figure \ref{fig2}.
\begin{figure}
\begin{center}
\includegraphics[scale=1.2]{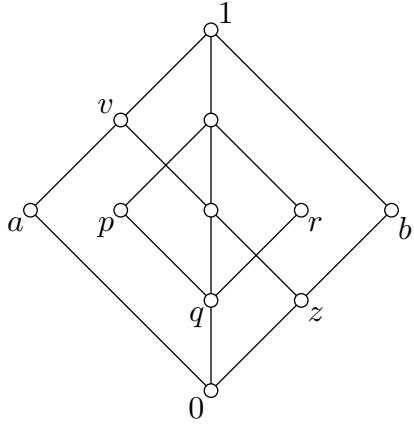}
\caption{The non-complemented, non-modular lattice $L$, satisfying $\mathsf{C}(L)=\mathsf{Pol}_{0,1}(L)$}
\label{fig2}
\end{center}
\end{figure}
We show that all the functions $\chi_c$ for $c\in \mathrm{J}(L)$ are polynomials. First, we find a polynomial for the function $\chi_a$. It can be easily seen that 
$$\chi_a(x)=\left[(x\wedge a)\vee r\right] \wedge \left[(x\wedge a)\vee b\right].$$
The following expressions show that all the functions $\chi_c$ for $c\in \mathsf{J}(L)=\{a,p,q,r,z,b\}$ are polynomials as well:

$\chi_b(x)=\chi_a\big((x\wedge b)\vee q\big)$, $ \chi_1(x)=\chi_a(x) \wedge \chi_b(x)$, $ \chi_q(x)=\chi_1\big((x\wedge q) \vee b \big)$, 

$ \chi_p(x)=\chi_1\big((x\wedge p) \vee v \big)$, $ \chi_r(x)=\chi_1\big((x\wedge r) \vee v \big)$, $ \chi_v(x)=\chi_a(x)\wedge \chi_q(x)$,

$ \chi_z(x)=\chi_v\big((x\wedge z) \vee a \big)$.

\end{example}

We have transformed our problem to deciding whether or not the functions $\chi_a$ for $a\in \mathsf{J}(L)$ are polynomials. The following lemma gives a useful sufficient condition for a function of this form to be a polynomial on $L$.  

\begin{lemma}\label{lemn2}
Let $L$ be a finite lattice, $a\in \mathsf{J}(L)$ be a join-irreducible element and $b$ be the unique element covered by $a$. If there is a polynomial $f\in \mathsf{Pol}_u(L)$ satisfying $f(b)=0$ and $f(a)=1$ then $\chi_a$ is a polynomial as well.
\end{lemma}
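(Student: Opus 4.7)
The plan is to build $\chi_a$ by clamping the input into the principal filter/ideal surrounding $a$. Specifically, I will propose the polynomial
\[
p(x) := f(x\wedge a),
\]
which lies in $\mathsf{Pol}_u(L)$ because composition of polynomials is again a polynomial (here $f$ applied to the unary polynomial $x\mapsto x\wedge a$). The claim to verify is $p=\chi_a$, so it suffices to determine $p(x)$ in the two cases $x\geq a$ and $x\not\geq a$.

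The easy case is $x\geq a$: then $x\wedge a=a$, so $p(x)=f(a)=1=\chi_a(x)$. For the other case I will need two ingredients. The first is that every lattice polynomial is monotone, since it is built inductively from projections, constants and the monotone operations $\vee,\wedge$; in particular $f$ is order-preserving. The second, which I consider the only nontrivial step, is a purely order-theoretic lemma specific to join-irreducible elements in finite lattices: \emph{if $a\in\mathsf{J}(L)$ has unique lower cover $b$ and $y<a$, then $y\leq b$.} I would prove this by considering $y\vee b$: since $b\leq y\vee b\leq a$ and $a$ covers $b$, either $y\vee b=b$ (whence $y\leq b$, as desired) or $y\vee b=a$; but the latter is excluded because then $a=y\vee b$ with $y,b<a$ would contradict the join-irreducibility of $a$.

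With this lemma in hand, the second case is immediate. Suppose $x\not\geq a$. Then $x\wedge a<a$, so by the lemma $x\wedge a\leq b$. Monotonicity of $f$ yields
\[
p(x)=f(x\wedge a)\leq f(b)=0,
\]
hence $p(x)=0$. On the other hand, $x\not\geq a$ also gives $\chi_a(x)=0$ (including the boundary case $x=0$, which satisfies $0\not\geq a$ since $a$ is join-irreducible and hence nonzero). Therefore $p(x)=\chi_a(x)$ in this case as well, completing the identification $\chi_a=p\in\mathsf{Pol}_u(L)$.

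I do not foresee a real obstacle: the argument is essentially a one-line construction once the covering lemma for join-irreducibles is established, and that lemma is a short consequence of the definitions. The only point that deserves a careful word in writing is that monotonicity of polynomials is what converts the hypothesis $f(b)=0$ into the pointwise conclusion $f(y)=0$ for all $y\leq b$, which is the crux of why the clamping $x\mapsto x\wedge a$ succeeds in separating the filter $F(a)$ from its complement.
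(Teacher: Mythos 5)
Your proposal is correct and follows essentially the same route as the paper: both define the candidate polynomial as $f(x\wedge a)$, split into the cases $x\geq a$ and $x\not\geq a$, and use monotonicity of $f$ together with the fact that $b$ is the greatest element strictly below $a$. The only difference is that you explicitly prove this last fact (via the dichotomy $y\vee b\in\{b,a\}$ and join-irreducibility), which the paper asserts without argument.
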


\begin{proof}
Since $x\wedge a=a$ if and only if $x\geq a$ and $b$ is the greatest element of the set $\{x\in L: x<a\}$, it follows that $x\wedge a\leq b$ for all $x\ngeq a$. Consequently, using monotonicity of $f$ we obtain for all $x\in L$ 
$$
\chi_a(x)=f(x\wedge a)=\begin{cases}
f(a)=1,\ \text{if}\ x\geq a,\\
f(x\wedge a)\leq f(b)=0,\ \text{if}\ x\ngeq a.
\end{cases}
$$
\end{proof}

Now we are ready to show that lattices from the class $\mathsf{S}_{\mathsf{agg}}$ necessarily have only trivial tolerances. 

\begin{lemma}\label{lemn4}
Let $L$ be a lattice. Then $\mathsf{C}(L)=\mathsf{Pol}_{0,1}(L)$ yields $\mathsf{Tol}(L)=\{id_L,L^2\}$.
\end{lemma}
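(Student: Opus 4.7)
The plan is to argue by contradiction, showing that any non-identity tolerance $T$ on $L$ must be the full relation $L^2$. The key tool will be the fact (from Lemma \ref{lemn3}(iii)) that unary polynomials — and in particular the functions $\chi_a$, which by hypothesis and Lemma \ref{lemn1} are polynomials — preserve $T$. The intuition is that the $\chi_a$ for join-irreducible $a$ act as ``detectors" for the elements lying above $a$, so whenever a tolerance relates two genuinely different lattice elements, some $\chi_a$ will blow this up into the relation $(0,1)\in T$.

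First I would take $T\in\mathsf{Tol}(L)$ with $T\neq id_L$ and pick some $(x,y)\in T$ with $x\neq y$. Lemma \ref{lemn3}(ii) gives $(x\wedge y,x\vee y)\in T$, so setting $u:=x\wedge y$ and $v:=x\vee y$ I get two \emph{comparable} elements $u<v$ (strict since $x\neq y$) with $(u,v)\in T$.

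Next I would produce a join-irreducible witness. Since $L$ is finite, $v=\bigvee\{a\in\mathsf{J}(L):a\leq v\}$. If every join-irreducible below $v$ were also below $u$, then $v\leq u$, contradicting $u<v$. Hence there exists $a\in\mathsf{J}(L)$ with $a\leq v$ and $a\not\leq u$. By hypothesis and Lemma \ref{lemn1}, $\chi_a\in\mathsf{Pol}_u(L)$, so Lemma \ref{lemn3}(iii) yields $\bigl(\chi_a(u),\chi_a(v)\bigr)\in T$.

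Finally I would evaluate: since $a\not\leq u$ we have $\chi_a(u)=0$, while $v\geq a>0$ gives $\chi_a(v)=1$. Thus $(0,1)\in T$, and Lemma \ref{lemn3}(i) forces $T=L^2$. This shows $\mathsf{Tol}(L)\subseteq\{id_L,L^2\}$, and since these two are clearly always tolerances, equality holds. I do not expect any significant obstacle here — once one notices that $\chi_a$ is forced to be a polynomial, everything falls out of the Lemma \ref{lemn3} package; the only mildly technical point is using the finite-lattice decomposition into join-irreducibles to locate the separating element $a$.
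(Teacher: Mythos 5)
Your proof is correct and follows essentially the same route as the paper: use Lemma \ref{lemn3}(ii) to extract a comparable pair $u<v$ in $T$, apply a characteristic function $\chi$ (a polynomial by the hypothesis $\mathsf{C}(L)=\mathsf{Pol}_{0,1}(L)$) via Lemma \ref{lemn3}(iii) to get $(0,1)\in T$, and conclude $T=L^2$. The only difference is that the paper applies $\chi_v$ directly (which already satisfies $\chi_v(u)=0$, $\chi_v(v)=1$), so your detour through Lemma \ref{lemn1} and a separating join-irreducible $a\leq v$, $a\not\leq u$ is an unnecessary but harmless extra step.
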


\begin{proof}
Assume that $\mathsf{C}(L)=\mathsf{Pol}_{0,1}(L)$ and let $T\in\mathsf{Tol}(L)$ be a tolerance relation satisfying $T\neq id_L$. Thus there is a pair of elements $(a,b)\in T$ such that $a<b$. From the assumption $\mathsf{C}(L)=\mathsf{Pol}_{0,1}(L)$ and $\chi_b\in \mathsf{C}(L)$ we conclude $\chi_b\in {Pol}_u(L)$. By Lemma \ref{lemn3} (iii), $\chi_b$ preserves $T$, and as $(a,b)\in T$, we conclude $\big(\chi_b(a),\chi_b(b)\big)=(0,1)\in T$. Hence, by Lemma \ref{lemn3} (ii) we obtain $[0,1]^2\subseteq T$, and $T=L^2$. This shows $\mathsf{Tol}(L)=\{id_L,L^2\}$, completing the proof.  
\end{proof}

The following lemma shows that the previous necessary condition concerning tolerances of $L$ is also sufficient.

\begin{lemma}
For a finite lattice $L$, the condition $\mathsf{Tol}(L)=\{id_L,L^2\}$ implies $\mathsf{C}(L)=\mathsf{Pol}_{0,1}(L)$.
\end{lemma}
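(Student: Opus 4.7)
By Lemma \ref{lemn1}, it suffices to produce, for each join-irreducible $a \in \mathsf{J}(L)$, the aggregation function $\chi_a$ as a polynomial. Letting $b$ denote the unique element of $L$ covered by $a$, Lemma \ref{lemn2} further reduces the task to finding a unary polynomial $f \in \mathsf{Pol}_u(L)$ satisfying $f(b) = 0$ and $f(a) = 1$.

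To manufacture such an $f$, the plan is to exploit the tolerance-construction in Lemma \ref{lemn3}(iv). I would define
$$B := \{(p(b), p(a)) : p \in \mathsf{Pol}_u(L)\} \subseteq L^2.$$
Since the join (resp.\ meet) of two unary polynomials is again a unary polynomial, and every $c \in L$ arises from the constant polynomial $p \equiv c$, the set $B$ is a diagonal sublattice of $L^2$. Lemma \ref{lemn3}(iv) then gives that $T := B \circ B^{-1}$ is a tolerance on $L$. The key observation is that $T$ is non-trivial: taking $z = a$, we have $(b,a) \in B$ (via the identity polynomial) and $(a,a) \in B$ (via $q \equiv a$), so $(b, a) \in T$, and since $b \ne a$ this forces $T \ne id_L$. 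By hypothesis $T = L^2$.

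In particular $(0, 1) \in T$, so there exist $z \in L$ and polynomials $p, q \in \mathsf{Pol}_u(L)$ with $p(b) = 0$, $p(a) = z$, $q(b) = 1$, $q(a) = z$. Since every lattice polynomial is monotone, $z = q(a) \geq q(b) = 1$, forcing $z = 1$; hence $p(a) = 1$ and $p(b) = 0$, which is exactly the polynomial demanded by Lemma \ref{lemn2}. I do not foresee any real obstacle beyond the choice of $B$: verifying that $B$ is a diagonal sublattice is immediate, the witness $(b,a) \in T$ is explicit, and the monotonicity of polynomials collapses the mere membership $(0,1) \in T$ to the polynomial we actually need.
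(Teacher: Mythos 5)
Your proposal is correct and follows essentially the same route as the paper's proof: the same set $B=\{(p(b),p(a)):p\in\mathsf{Pol}_u(L)\}$, the same appeal to Lemma \ref{lemn3}(iv) to get the tolerance $B\circ B^{-1}$, the same witness $(b,a)$ for non-triviality, and the same monotonicity argument extracting the polynomial required by Lemma \ref{lemn2}. No issues.
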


\begin{proof}
By Lemma \ref{lemn1} it is sufficient to show that $\chi_a$ is a polynomial for each $a\in\mathsf{J}(L)$. Hence, let $a\in \mathsf{J}(L)$ be an arbitrary join-irreducible element and $b$ be the unique element covered by $a$. 

First, we show that $B=\big\{\big(p(b),p(a)\big):p\in\mathsf{Pol}_u(L)\big\}$ is a diagonal sublattice of $L^2$. Suppose $(x_1,y_1),(x_2,y_2)\in B$, i.e., there are two polynomials $p_1,p_2\in\mathsf{Pol}_u(L)$ with $p_1(b)=x_1$, $p_1(a)=y_1$ and $p_2(b)=x_2$, $p_2(a)=y_2$. Then 
$$(x_1,y_1)\vee (x_2,y_2)=(x_1\vee x_2,y_1\vee y_2)=\big(p_1(b)\vee p_2(b),p_1(a)\vee p_2(a)\big)$$ and 
$$(x_1,y_1)\wedge (x_2,y_2)=(x_1\wedge x_2,y_1\wedge y_2)=\big(p_1(b)\wedge p_2(b),p_1(a)\wedge p_2(a)\big).$$ Since $p_1\vee p_2$ as well as $p_1\wedge p_2$ are polynomials, the set $B$ is closed under the lattice operations in $L^2$. The inclusion $id_L\subseteq B$ follows from the fact that each constant function is a polynomial.

According to Lemma \ref{lemn3} (iv), the composition $B\circ B^{-1}$ is a tolerance relation of the lattice $L$. Moreover, we have $(b,a)\in B\circ B^{-1}$ since the identical function is polynomial and $B^{-1}$ is reflexive. Consequently, $B\circ B^{-1}\neq id_L$ and the assumption $\mathsf{Tol}(L)=\{id_L,L^2\}$ yields $B\circ B^{-1}=L^2$. Then $(0,1)\in B\circ B^{-1}$, which implies $(0,x)\in B$ and $(x,1)\in B^{-1}$ for some $x\in L$. Due to the definition of $B$, there are $p,q\in\mathsf{Pol}_u(L)$ such that $(0,x)=\big(p(b),p(a)\big)$ and $(1,x)=\big(q(b),q(a)\big)$. From this fact and monotonicity of $q$ we obtain $1= q(b)\leq q(a)=x$. This yields $p(b)=0$ together with $p(a)=x=1$. Finally, from Lemma \ref{lemn2} we obtain that $\chi_a(x)=p(x\wedge a)$ is a polynomial, which completes the proof.
\end{proof}

Consequently, we obtain the main result of the paper describing lattices having smallest sets of aggregation functions by means of their tolerances:

\begin{theorem}
Let $L$ be a finite lattice. Then $L\in \mathsf{S}_{\mathsf{agg}}$ if and only if $\mathsf{Tol}(L)=\{id_L,L^2\}$.
\end{theorem}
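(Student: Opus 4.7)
The theorem is a direct equivalence whose two halves have already been isolated as the two lemmas immediately preceding it. My plan is therefore to prove it as a one-sentence corollary: Lemma \ref{lemn4} supplies the necessity $\mathsf{C}(L)=\mathsf{Pol}_{0,1}(L)\Rightarrow\mathsf{Tol}(L)=\{id_L,L^2\}$, and the unnamed lemma stated just before the theorem supplies the converse $\mathsf{Tol}(L)=\{id_L,L^2\}\Rightarrow\mathsf{C}(L)=\mathsf{Pol}_{0,1}(L)$. Concatenating the two statements yields exactly the biconditional in the theorem.

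If I were asked to reprove the two directions from scratch, I would proceed as follows. For necessity, start from any tolerance $T\neq id_L$, pick an ordered witness pair $(a,b)\in T$ with $a<b$, and apply the characteristic function $\chi_b$, which by the standing hypothesis and Lemma \ref{lemn1} lies in $\mathsf{Pol}_u(L)$; by Lemma \ref{lemn3}(iii) it preserves $T$, so $(0,1)\in T$, and then Lemma \ref{lemn3}(ii) blows $T$ up to $L^2$. For sufficiency, appeal to Lemma \ref{lemn1} to reduce to showing that $\chi_a$ is a polynomial for every join-irreducible $a$, with $b$ the unique element $a$ covers. Assemble the set $B=\{(p(b),p(a)):p\in\mathsf{Pol}_u(L)\}$; closure of $\mathsf{Pol}_u(L)$ under $\vee$, $\wedge$, and constants shows $B$ is a diagonal sublattice of $L^2$, whence Lemma \ref{lemn3}(iv) makes $B\circ B^{-1}$ a tolerance, and it is strictly larger than $id_L$ since it contains $(b,a)$. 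By the tolerance hypothesis $B\circ B^{-1}=L^2$, so $(0,1)\in B\circ B^{-1}$; chasing this through the definition of $B$ and exploiting monotonicity of the two witnessing unary polynomials yields a single $p\in\mathsf{Pol}_u(L)$ with $p(b)=0$ and $p(a)=1$. Lemma \ref{lemn2} then delivers $\chi_a$ as a polynomial and closes the argument.

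All substantive content lives in the two preparatory lemmas; the only genuinely delicate step is the sufficiency direction, whose key idea is the sublattice-composition trick that converts the pool of unary polynomials into a tolerance via $B\circ B^{-1}$, allowing the hypothesis $\mathsf{Tol}(L)=\{id_L,L^2\}$ to be invoked to manufacture a separating polynomial. At the level of the theorem itself no further obstacle arises, so the proof plan reduces to a clean citation of Lemma \ref{lemn4} and its successor.
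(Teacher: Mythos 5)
Your proposal is correct and matches the paper exactly: the paper likewise derives the theorem as an immediate consequence of Lemma \ref{lemn4} (necessity) and the unnamed lemma preceding the theorem (sufficiency), and your sketches of the two directions reproduce the paper's own arguments, including the $B\circ B^{-1}$ construction. The only cosmetic quibble is that in the necessity direction the paper obtains $\chi_b\in\mathsf{Pol}_u(L)$ directly from the hypothesis $\mathsf{C}(L)=\mathsf{Pol}_{0,1}(L)$ rather than via Lemma \ref{lemn1}, which is immaterial.
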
  

We have seen that lattices having a smallest set of aggregation functions are necessarily simple. 
As the following example shows, there are simple lattices with non-trivial tolerance relations.

\begin{example}
Consider the lattice $L$ from Figure \ref{fig4}. It is a well-known fact that the lattice $M_3$ is simple, the same is easily seen for $L$ as a ``gluing" of two copies of $M_3$ and having two elements in common.   

Further, consider the relation $T=T_1^2\cup T_2^2$. Obviously, $T$ is reflexive and symmetric. Moreover, $T_1$ and $T_2$ are sublattices of $L$. Consequently, $x\vee y, x\wedge y \in T_i$ if $x,y\in T_i$ for $i\in \{1,2\}$. Further, $x\vee y\in T_2$ if $x\in T_1$ and $y\in T_2$,  $x\wedge y\in T_1$ whenever $x\in T_1$ and $y\in T_2$. 

Now, let $(a,b), (c,d)\in T$ be arbitrary elements. As $T=T_1^2\cup T_2^2$, first assume that $(a,b)\in T_1^2$. Then $(a\vee c,b\vee d)\in T_1^2$ if $(c,d)\in T_1^2$, while $(a\vee c,b\vee d)\in T_2^2$ if $(c,d)\in T_2^2$. If $(a,b)\in T_2^2$, then always $(a\vee c,b\vee d)\in T_2^2$. The compatibility of $T$ with respect to the meet operation can be proved analogously. Hence, $T$ is a non-trivial tolerance relation on $L$.

\end{example}
 
\begin{figure}
\begin{center}
\includegraphics[scale=1]{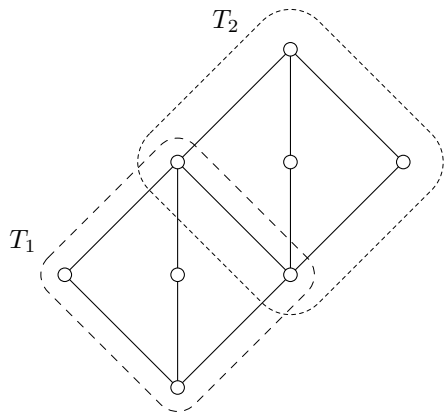}
\caption{A simple lattice having a non-trivial tolerance}
\label{fig4}
\end{center}
\end{figure}

In what follows we will present several purely lattice-theoretical conditions for lattices which guarantee the triviality of their tolerances. Recall that a bounded lattice $L$ is called {\it complemented} if any element $x\in L$ has a complement $y\in L$, i.e. we have $x\wedge y=0$ and $x\vee y=1$. A lattice $L$ is {\it relatively complemented} whenever each of its 
intervals $[a,b]$ for $a\leq b$ is a complemented lattice. 

\begin{theorem}
Let $L$ be finite, bounded, simple, relatively complemented lattice. Then $L\in \mathsf{S}_{\mathsf{agg}}$.
\end{theorem}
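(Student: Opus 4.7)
The plan is to apply the main theorem just proved: it suffices to show that $\mathsf{Tol}(L) = \{id_L, L^2\}$. So I would fix an arbitrary tolerance $T\in\mathsf{Tol}(L)$ with $T\neq id_L$ and aim to derive $(0,1)\in T$, whereupon Lemma \ref{lemn3}(ii) immediately gives $T = L^2$.

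The crucial ingredient is a chain-transitivity lemma which I would isolate first: if $(a,b),(b,c)\in T$ and $a\leq b\leq c$, then $(a,c)\in T$. This is where relative complementation does its work. Choose $b'$ to be a relative complement of $b$ in the interval $[a,c]$, so that $b\wedge b'=a$ and $b\vee b'=c$. Applying Lemma \ref{lemn3}(iii) with the unary polynomial $f(x)=x\wedge b'$ to $(b,c)\in T$ yields $(b\wedge b',\,c\wedge b')=(a,b')\in T$. Since $(a,b)\in T$ as well, the compatibility of $T$ with $\vee$ gives $(a\vee a,\,b\vee b')=(a,c)\in T$, as desired.

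Next I would invoke simplicity. The transitive closure $\Theta$ of $T$ is again compatible (padding shorter $T$-chains to a common length) and hence a congruence containing $T$; since $T\neq id_L$, also $\Theta\neq id_L$, and simplicity forces $\Theta = L^2$. In particular $(0,1)\in \Theta$, so there is a finite $T$-chain $0 = z_0, z_1,\dots, z_n = 1$ with $(z_i,z_{i+1})\in T$ for every $i$. Setting $w_i := z_0\vee\cdots\vee z_i$, the compatibility of $T$ applied to $(z_i,z_{i+1})\in T$ together with $(w_i,w_i)\in T$ yields $(z_i\vee w_i,\,z_{i+1}\vee w_i)=(w_i,w_{i+1})\in T$. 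This produces a monotone chain $0=w_0\leq w_1\leq\cdots\leq w_n=1$ with consecutive entries $T$-related.

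A straightforward induction using the chain-transitivity lemma then collapses this monotone chain into the single pair $(0,1)\in T$, and Lemma \ref{lemn3}(ii) forces $T = L^2$. The main obstacle I anticipate is the chain-transitivity lemma; everything after it is routine packaging of simplicity together with the monotonisation trick. In that lemma, the choice of the unary polynomial $x\wedge b'$ with $b'$ the relative complement of $b$ in $[a,c]$ is precisely what converts the relative complementation hypothesis into a manipulation of $T$, and it is the point at which that hypothesis is indispensable.
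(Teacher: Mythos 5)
Your proof is correct, and it reaches the conclusion by a route that differs in its overall architecture from the paper's, even though the decisive use of relative complementation is the same. The paper proves outright that \emph{every} tolerance on $L$ is transitive, i.e. $\mathsf{Tol}(L)=\mathsf{Con}(L)$: it first reduces transitivity for an arbitrary triple $a,b,c$ to the comparable case by passing to $a\wedge b$ and $a\wedge b\wedge c$ and then joining back, and only then invokes the relative complement $d$ of $b$ in $[c,a]$ exactly as you do (your computation $(a,b'),(a,b)\in T\Rightarrow(a,c)\in T$ is the paper's $(a,d),(a,b)\in T\Rightarrow (a\wedge a,b\wedge d)=(a,c)\in T$ with the order convention reversed). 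Simplicity then finishes immediately. You instead prove only the weaker ``chain transitivity'' for comparable triples and compensate by running the transitive-closure-plus-monotonization argument: $\Theta=T_t=L^2$ by simplicity, extract a $T$-chain from $0$ to $1$, monotonize it with the partial joins $w_i$, and collapse it by induction. Interestingly, that second half is essentially the machinery the paper deploys in its \emph{other} sufficient-condition theorem (the one on atoms and coatoms), so you have in effect merged the two proofs' techniques. What the paper's version buys is a cleaner structural statement ($\mathsf{Tol}(L)=\mathsf{Con}(L)$ for relatively complemented lattices, with no appeal to chains or closures); what yours buys is that you never need the reduction from arbitrary triples to comparable ones, since the monotonization step supplies comparability for free. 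Both arguments are complete and correct.
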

\begin{proof}
We have to prove that $\mathsf{Tol}(L)=\{id_L,L^2\}$. As $L$ is assumed to be simple, it is enough to show that $\mathsf{Tol}(L)=\mathsf{Con}(L)$, i.e. that every tolerance $T$ on $L$ is transitive: 
for all $a,b,c\in L$, $(a,b),(b,c)\in T$ implies $(a,c)\in T$. 

First, we prove that if transivity of $T$ holds for any triple $a,b,c\in L$ with $a\geq b\geq c$, then so does for any triple $a,b,c\in L$. Indeed, let $(a,b),(b,c)\in T$ for $a,b,c\in L$. Applying compatibility of $T$, we obtain $(a,a\wedge b)\in T$ and hence 
$(a\wedge b,a\wedge b\wedge c)=(a\wedge b,(a\wedge b)\wedge c)\in T$. Clearly, $a\geq a\wedge b\geq a\wedge b\wedge c$, thus by our assumption we conclude $(a,a\wedge b\wedge c)\in T$. Similarly, exchanging the elements $a$ and $c$ and using the symmetry 
of $T$ we obtain $(a\wedge b\wedge c,c)\in T$ as well. Further, compatibility of $T$ yields 
$$(a,c)=(a\vee (a\wedge b\wedge c),(a\wedge b\wedge c)\vee c)\in T.$$

Now, we are ready to prove that any tolerance $T$ on $L$ is transitive: for this assume $(a,b),(b,c)\in T$ for some $a,b,c\in L$ with $a\geq b\geq c$. As $L$ is relatively complemented, there exists a complement $d$ of $b$ in the interval $[c,a]$, i.e. $b\vee d=a$ and $b\wedge d=c$. Then, by compatibility of $T$, $(a,d)=(b\vee d,c\vee d)\in T$, from which we obtain 
$$(a,c)=(a\wedge a,b\wedge d)\in T.$$ 
This shows transitivity of $T$ and finishes the proof.
\end{proof}

Recall that a lattice $L$ is {\it modular} if it fulfills the modular quasi-identity: for all $a,b,c\in L$, $a\leq c$ yields 
$a\vee (b\wedge c)=(a\vee b)\wedge c$. It is well known that modular complemented lattices are relatively complemented, cf. \cite{G1}. Hence, we obtain the following corollary:

\begin{corollary}
If $L$ is finite, simple, modular, and complemented lattice, then $L\in \mathsf{S}_{\mathsf{agg}}$. 
\end{corollary}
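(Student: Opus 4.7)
The plan is to reduce the corollary directly to the preceding theorem. That theorem states that every finite, bounded, simple, relatively complemented lattice belongs to $\mathsf{S}_{\mathsf{agg}}$. Consequently, it suffices to verify that the modular complemented hypothesis implies relative complementation. This is the classical implication cited above from \cite{G1}, so the argument amounts to an application of two known results glued together.

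To make the reduction explicit, I would fix an interval $[a,b]\subseteq L$ and an element $c\in [a,b]$, and exhibit a relative complement of $c$ in $[a,b]$. Using complementation in $L$, pick $c'\in L$ with $c\wedge c'=0$ and $c\vee c'=1$. The natural candidate for the relative complement is
$$d:=(c'\vee a)\wedge b.$$
Applying modularity in the form $a\le c$ one rewrites $c\wedge(c'\vee a)=(c\wedge c')\vee a=a$, and then intersecting with $b$ gives $c\wedge d=a$. Dually, using $c\le b$ and modularity, $c\vee d=c\vee\bigl((c'\vee a)\wedge b\bigr)=(c\vee c'\vee a)\wedge b=b$. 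Thus every interval of $L$ is complemented, i.e. $L$ is relatively complemented.

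With relative complementation secured, the corollary is immediate: $L$ is finite, bounded (as every complemented lattice is), simple and relatively complemented, so the preceding theorem yields $L\in\mathsf{S}_{\mathsf{agg}}$. There is no real obstacle here; the only step that requires any care is the modular computation of the relative complement, and even that is classical and standard. The corollary is therefore best presented simply as an application of the theorem together with the well-known lattice-theoretic fact from \cite{G1}.
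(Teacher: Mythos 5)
Your proof is correct and follows exactly the paper's route: the corollary is obtained by citing the classical fact that modular complemented lattices are relatively complemented and then invoking the preceding theorem. The only difference is that you spell out the standard computation $d=(c'\vee a)\wedge b$ behind that classical fact, which the paper simply references to Gr\"atzer; your modular calculations of $c\wedge d=a$ and $c\vee d=b$ are accurate.
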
  

One can easily see that the lattice depicted on Figure \ref{fig1} is simple, modular and complemented. The following theorem gives another sufficient condition for a lattice $L$ to has a smallest set of aggregation functions, which is fulfilled by the lattice from Figure \ref{fig1} as well. On the other hand, the lattice from Figure \ref{fig2} does not satisfy neither the above sufficient condition (it is simple, non-modular, non-complemented) nor the condition mentined below. 

\begin{theorem}
Any finite simple lattice $L$ for which the join of atoms is $1$ or the meet of coatoms is $0$ belongs to $\mathsf{S}_{\mathsf{agg}}$. 
\end{theorem}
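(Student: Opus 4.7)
By the just-proved main theorem, it suffices to verify $\mathsf{Tol}(L)=\{id_L,L^2\}$. Since the defining conditions of a tolerance involve both $\vee$ and $\wedge$, one has $\mathsf{Tol}(L)=\mathsf{Tol}(L^\partial)$, and under dualisation $\mathsf{At}(L^\partial)=\mathsf{CoAt}(L)$, so the hypothesis ``$\bigwedge\mathsf{CoAt}(L)=0$ in $L$'' becomes ``$\bigvee\mathsf{At}(L^\partial)=1$ in $L^\partial$''. I may therefore treat only the case $\bigvee\mathsf{At}(L)=1$; the other then follows by applying the same argument to $L^\partial$. Fix a tolerance $T\neq id_L$; by Lemma~\ref{lemn3}(ii) it will be enough to prove $(0,1)\in T$, for then $L^2=[0,1]^2\subseteq T$.

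\emph{Stage 1: obtain an atom $p$ with $(0,p)\in T$.} Starting from any non-diagonal pair of $T$ and using Lemma~\ref{lemn3}(i)--(ii), I extract a prime pair $(a_0,b_0)\in T$ with $b_0$ covering $a_0$. If some atom $p\leq b_0$ fails to lie below $a_0$, then meeting $(a_0,b_0)$ with the reflexive $(p,p)$ via compatibility yields $(a_0\wedge p,b_0\wedge p)=(0,p)\in T$. Otherwise every atom below $b_0$ lies below $a_0$; this forces $b_0<1$ (else $\bigvee\mathsf{At}(L)\leq a_0<b_0=1$ would contradict the hypothesis), so an atom $q\not\leq b_0$ exists. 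Joining $(a_0,b_0)$ with $(q,q)$ produces $(a_0\vee q,b_0\vee q)\in T$, and since $b_0$ covers $a_0$ the meet $(a_0\vee q)\wedge b_0$ equals either $a_0$---yielding a new prime pair $a_0\vee q\prec b_0\vee q$ in $T$ strictly above $b_0$---or $b_0$ (a trivial pair). Iterating in the nontrivial case and using that $L$ is finite, the process eventually reaches the first alternative and delivers $(0,p)\in T$.

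\emph{Stage 2: upgrade $(0,p)$ to $(0,1)$.} The set $I=\{x\in L:(0,x)\in T\}$ is a principal ideal $(c]$ (downward closed by Lemma~\ref{lemn3}(i) and join-closed by compatibility with $(0,0)$), with $c\geq p>0$. Suppose for contradiction that $c<1$. Since $\bigvee\mathsf{At}(L)=1>c$, some atom $q$ is not below $c$. Joining $(0,c)\in T$ with $(q,q)$ gives $(q,c\vee q)\in T$; for every atom $r\neq q$ with $r\leq c\vee q$, meeting $(q,c\vee q)$ with $(r,r)$ yields $(0,r)\in T$, whence $r\leq c$. Thus inside $[0,c\vee q]$ the only atom that fails to lie below $c$ is $q$ itself. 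Carrying out this analysis for every atom $q_i\not\leq c$ and invoking the identity $c\vee\bigvee_{q_i\not\leq c}q_i=\bigvee\mathsf{At}(L)=1$, the family of local constraints combines into a nontrivial compatible equivalence relation on $L$ that separates $c$ from $1$, i.e.\ a proper congruence of $L$---contradicting the simplicity of $L$. Hence $c=1$, so $(0,1)\in T$ and $T=L^2$.

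\emph{Principal obstacle.} The delicate point is the extraction of the congruence at the end of Stage~2. Because tolerances on $L$ are not transitive, one cannot simply chain $(0,c)\in T$ with $(c,1)\in T$ to conclude $(0,1)\in T$; what replaces transitivity is the rigidity that every atom $q\not\leq c$ imposes on $[c,c\vee q]$ (namely that $q$ is its unique ``new'' atom). The hypothesis $\bigvee\mathsf{At}(L)=1$ is used to tile the entire interval $[c,1]$ with such rigidity pictures, and the careful bookkeeping needed to assemble these pictures into a genuinely $\vee,\wedge$-compatible equivalence is the main technical labour of the proof.
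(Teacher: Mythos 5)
Your high-level plan (reduce to showing $(0,1)\in T$ for every tolerance $T\neq id_L$, via Lemma~\ref{lemn3}(ii)) matches the paper, but both of your stages contain genuine gaps, and the common root cause is that the simplicity of $L$ never enters your argument in a form you actually use. Stage~1 invokes only finiteness and $\bigvee\mathsf{At}(L)=1$, yet its conclusion is false under those hypotheses alone: in $N_5$ with $0<p_1<b<1$ and $p_2$ incomparable to $p_1$ and $b$, the join of the atoms $p_1,p_2$ is $1$, and the congruence with blocks $\{0\},\{p_1,b\},\{p_2\},\{1\}$ is a non-diagonal tolerance for which no atom is $T$-related to $0$; your procedure necessarily starts from the covering pair $(p_1,b)$, alternative (A) fails, and the only available atom $q=p_2$ lands in your ``trivial pair'' branch, so the iteration halts with nothing. (Of course $N_5$ is not simple, but since Stage~1 nowhere uses simplicity, this shows it cannot be completed as written; the unhandled ``all choices of $q$ give trivial pairs'' case is a real obstruction, not bookkeeping. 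Separately, the claims that $(a_0\vee q,\,b_0\vee q)$ is again a covering pair and that it lies ``strictly above $b_0$'' are both false in general.) Stage~2 has the same problem in a starker form: the decisive step---assembling the ``local rigidity pictures'' into a nontrivial proper congruence separating $c$ from $1$---is asserted but never performed; you yourself flag it as the main technical labour and then omit it. It is not even clear what the candidate congruence is: by simplicity any congruence containing $(0,c)$ with $c>0$ is all of $L^2$, so the relation would have to be manufactured from data other than $T$, and none is defined.

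The paper dissolves both difficulties with a single move you are missing: pass to the transitive closure $T_t=\bigcup_{n}T^n$, which is a congruence containing $T\neq id_L$; simplicity forces $T_t=L^2$, hence $(0,1)\in T^n$ for some $n$, yielding (after replacing $a_k$ by $a_0\vee\dots\vee a_k$) an increasing chain $0=a_0\leq a_1\leq\dots\leq a_n=1$ whose consecutive links lie in $T$. For each atom $p$ there is then some $k$ with $p\not\leq a_k$ and $p\leq a_{k+1}$, so $(0,p)=(p\wedge a_k,\,p\wedge a_{k+1})\in T$; joining over all atoms and using $\bigvee\mathsf{At}(L)=1$ gives $(0,1)\in T$. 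This replaces your Stage~1 iteration and your Stage~2 congruence extraction simultaneously, and it is where simplicity is genuinely consumed. Your dualization remark reducing the coatom case to the atom case is fine and agrees with the paper.
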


\begin{proof}
Assume that the join of atoms of $L$ is $1$. Further, let $T\neq id_L$ be a tolerance on $L$. For any natural number $n\in\mathbb N$ define inductively $T^1=T$ and $T^{n+1}=T^{n}\circ T$. It is well known that the relation $T_t=\bigcup \{T^n;\, n\in \mathbb N\}$, the so-called transitive closure of $T$, is the least transitive relation on $L$ containing $T$. Clearly, $T_t$ is a congruence on $L$, and as $L$ is simple, we conclude $T_t=L^2$. Consequently, 
$(0,1)\in T^n$ for some $n\in \mathbb N$ and thus there are $a_0,\dots,a_{n}\in L$, where $a_0=0, a_n=1$ and 
$(a_k,a_{k+1})\in T$ for all $k\in \{0,\dots,n-1\}$. We may assume that $a_k\leq a_{k+1}$ for all $k\in \{0,\dots,n-1\}$.
Indeed, by compatibility of $T$, $(a_0,a_1)\in T$ yields $(a_0,a_0\vee a_1)\in T$. This together with $(a_1,a_2)\in T$ gives 
$(a_0\vee a_1,a_0\vee a_1\vee a_2)\in T$. In the same way we obtain 
$(a_0\vee a_1\vee \dots \vee a_k,a_0\vee a_1\vee \dots \vee a_{k+1})\in T$ for all $k\in \{0,\dots,n-1\}$, where $a_0\vee a_1\vee \dots \vee a_n=1$.

Further, we may assume by Lemma \ref{lemn3} (ii) that $a_1$ is an atom of $L$ and, analogously, $a_{n-1}$ is its coatom. 

We will show that $(0,p)\in T$ for any atom $p\in L$. Evidently, this property holds for $p=a_1$. 
Further, let $p\neq a_1$ be an arbitrary atom of $L$. If there is $a_k$ with $p\not\leq a_k$ and $p\leq a_{k+1}$, by compatibility 
of $T$ we obtain $(0,p)=(p\wedge a_{k},p\wedge a_{k+1})\in T$. We will show that for $p$ such an element $a_k$ always exists.
Indeed, we have $p\leq a_n=1$, thus if $p\not\leq a_{n-1}$, we are done. In the opposite case we have $p\leq a_{n-1}$, hence again, $p\not\leq a_{n-2}$ verifies the existence of the desired element. Clearly, as $p\not\leq a_1$, the same reasoning leads after finitely many steps to the result. 

Consequently, we have proved $(0,p)\in T$ for any atom $p\in L$. Finally, as the join of atoms equals to $1$, applying compatibility of $T$, the last property yields 
$$(0,1)=(0,\bigvee \{p;\, p\in \mathsf{At}(L)\})\in T,$$ 
and hence $T=L^2$. Altogether we have $\mathsf{Tol}(L)=\{id_L,L^2\}$. 

The proof of the dual case when the meet of coatoms of $L$ is $0$ can be done in a similar way.
\end{proof}

\section{Conclusion}
In this paper we have shown that finite lattices $L$ for which the set $\mathsf{C}(L)$ of aggregation functions coincides with the set $\mathsf{Pol}_{0,1}(L)$ of its polynomials can be completely characterized by their tolerances. Moreover, we have mentioned several lattice-theoretical conditions which are sufficient for this property. 
We believe that our results can be used also for an analysis of special classes of aggregation 
functions on lattices or even certain posets. 

In the future work we would like to extend this idea to the study of lattices admitting certain richer classes of aggregation functions.

\section{Acknowledgments}
The authors would like to thank the anonymous reviewers for their helpful and constructive comments which helped enhance the presentation of this paper.

The second author was supported by the ESF Fund CZ.1.07/2.3.00/30.0041 and by the Slovak VEGA Grant 2/0028/13, the first author by the international project Austrian Science Fund (FWF)-Grant Agency of the Czech Republic (GA\v{C}R) I 1923-N25, by the AKTION project "Ordered structures for Algebraic Logic" 71p3 and by the Palack\'{y} University project IGA PrF 2015010.


\end{document}